\newtheorem{theorem}{Theorem}[section]
\numberwithin{equation}{section}
\begin{document}

\title[Curvature of symmetric product of Riemann surface]{On the
curvature of symmetric products of a compact Riemann surface}

\author[I. Biswas]{Indranil Biswas}

\address{School of Mathematics, Tata Institute of Fundamental
Research, Homi Bhabha Road, Bombay 400005, India}

\email{indranil@math.tifr.res.in}

\subjclass[2000]{14C20, 32Q10}

\keywords{Riemann surface, symmetric product, holomorphic bisectional curvature}

\date{}

\begin{abstract}
Let $X$ be a compact connected Riemann surface of genus
at least two. The main theorem of \cite{BR} says that for any
positive integer $n\, \leq\, 2({\rm genus}(X)-1)$, the
symmetric product $S^n(X)$ does not admit any
K\"ahler metric satisfying the condition that all the holomorphic
bisectional curvatures are nonnegative. Our aim here is to give a very
simple and direct proof of this result of B\"okstedt and Rom\~ao.
\end{abstract}

\maketitle

\section{Introduction}\label{sec1}

The main theorem of \cite{BR} says the following (see \cite[Theorem 1.1]{BR}):

\textit{Let $X$ be a compact connected Riemann surface of genus
at least two. If
$$
1\, \leq\, n\, \leq\, 2({\rm genus}(X)-1)\, ,
$$
then the symmetric product $S^n(X)$ does not admit any K\"ahler metric
for which all the holomorphic bisectional curvatures are nonnegative.}

Our aim here is to give a simple proof of this theorem.

In \cite{Bi}, the following related was proved (see \cite[Theorem 1.1]{Bi}):

\textit{Let $X$ be a compact connected Riemann surface of genus
at least two. If
$$
n\, > \, 2({\rm genus}(X)-1)\, ,
$$
then $S^n(X)$ does not admit any K\"ahler metric for which all the
holomorphic bisectional curvatures are nonnegative.}

\section{Nonnegative holomorphic bisectional curvature}

Let $X$ be a compact connected Riemann surface of genus $g$,
with $g\, \geq\, 2$. For any positive integer $n$, the $n$-fold 
symmetric product of $X$ will be denoted by $S^n(X)$. We recall
that $S^n(X)$ is the quotient of $X^n$ by the natural action of the group
of permutations $P_n$ of the index set $\{1\, , \cdots \, ,n\}$. Let
\begin{equation}\label{q}
q\, :\, X^n\,\longrightarrow\, S^n(X)\,:=\, X^n/P_n
\end{equation}
be the quotient map.

\begin{theorem}\label{thm1}
Take any integer $n\, \in\, [1\, , 2(g-1)]$. The symmetric product $S^n(X)$
does not admit any K\"ahler metric satisfying the condition that all the
holomorphic bisectional curvatures are nonnegative.
\end{theorem}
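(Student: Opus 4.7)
\emph{Proof plan.} The strategy is to identify the Albanese map of $S^n(X)$ with the Abel--Jacobi map $\phi_n \colon S^n(X) \to J(X)$, to use the curvature hypothesis together with the K\"ahler--Bochner formula to force $\phi_n$ to have rank $g$ at every point, and then to derive a contradiction separately in the two ranges $1 \leq n \leq g-1$ and $g \leq n \leq 2g-2$.

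First, using the quotient map $q$ of (\ref{q}), pullback identifies $H^0(S^n(X), \Omega^1)$ with the $P_n$-invariant subspace of $H^0(X^n, \Omega^1) = H^0(X, \Omega^1)^{\oplus n}$, which is just the diagonal copy of $H^0(X, \Omega^1)$. Hence $h^{1,0}(S^n(X)) = g$, and standard theory identifies $\operatorname{Alb}(S^n(X))$ with $J(X)$ and the Albanese map with (a translate of) $\phi_n$. In particular, $\phi_n^* \colon H^0(J(X), \Omega^1) \to H^0(S^n(X), \Omega^1)$ is an isomorphism.

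Next, I would suppose $\omega$ is a K\"ahler metric on $S^n(X)$ with nonnegative holomorphic bisectional curvature. Taking a trace, $\omega$ has nonnegative Ricci curvature, and the classical K\"ahler--Bochner formula then forces every holomorphic $1$-form on $S^n(X)$ to be $\omega$-parallel, hence of constant pointwise norm. For any nonzero $\eta \in H^0(J(X), \Omega^1)$, the pullback $\phi_n^* \eta$ is a nonzero parallel holomorphic $1$-form on $S^n(X)$, therefore nowhere vanishing. Since the cotangent bundle of $J(X)$ is trivialized by its global holomorphic $1$-forms, this forces $(d\phi_n)^*_p \colon T^*_{\phi_n(p)} J(X) \to T^*_p S^n(X)$ to be injective at every $p \in S^n(X)$, so $(d\phi_n)_p$ has rank $g$ at every point.

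Finally I would derive the contradiction in each range. If $1 \leq n \leq g-1$, then $\dim_\mathbb{C} S^n(X) = n < g$, and $(d\phi_n)_p$ cannot reach rank $g$. If $g \leq n \leq 2g-2$, then $\phi_n$ is forced to be a holomorphic submersion onto $J(X)$ with every fiber of complex dimension $n - g$; but the fiber over $L \in \operatorname{Pic}^n(X)$ is the complete linear system $\mathbb{P}(H^0(X, L))$, of dimension $h^0(L) - 1$. Choosing $L = \mathcal{O}_X(D)$ with $D$ an effective degree-$n$ subdivisor of a canonical divisor of $X$ (such $D$ exists because $K_X$ is effective and $\deg K_X = 2g-2 \geq n$), one has $K_X - D$ effective, and Riemann--Roch then yields $h^0(L) \geq n - g + 2$, producing a fiber of dimension at least $n - g + 1$. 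The genuinely analytic ingredient---and the step I expect to be the crux of the argument---is the Bochner parallelism of holomorphic $1$-forms under nonnegative Ricci curvature; once that is invoked, the remainder is bookkeeping on the Abel--Jacobi map.
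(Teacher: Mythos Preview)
Your argument is correct and takes a genuinely different route from the paper's. The paper proceeds by a single degree computation: it embeds $X$ in $S^n(X)$ via $x \mapsto x + x_1 + \cdots + x_{n-1}$ for fixed distinct points $x_i$, computes the pullback of the anticanonical bundle $K^{-1}_{S^n(X)}$ along this curve using the ramification divisor of $q\colon X^n \to S^n(X)$, and finds its degree to be $n - 2g + 1 < 0$; since nonnegative holomorphic bisectional curvature forces the Ricci form (the curvature of $K^{-1}$) to be a nonnegative $(1,1)$-form, this degree would have to be nonnegative, and the contradiction is immediate with no case division. Your approach instead passes through the Albanese/Abel--Jacobi map and invokes Bochner parallelism of holomorphic $1$-forms to force $\phi_n$ to have rank $g$ everywhere, then obtains a contradiction either from the dimension count (for $n < g$) or from the existence of special divisors via Riemann--Roch (for $g \leq n \leq 2g-2$). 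The paper's method is shorter and more elementary---no Bochner formula, no Riemann--Roch, no case split---while yours is more structural, making transparent the role of $h^{1,0}(S^n(X)) = g$ and locating the obstruction in the well-known geometry of the Abel--Jacobi map.
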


\begin{proof}
Let $K^{-1}_{X^n} \,=\, \bigwedge\nolimits^n TX^n$ and $K^{-1}_{S^n(X)}
\,=\, \bigwedge\nolimits^n TS^n(X)$ be the anti-canonical line bundles
of $X^n$ and $S^n(X)$ respectively. Fix distinct $n-1$ points $\{x_1\, ,
\cdots\, ,x_{n-1}\}$ of $X$. Let
\begin{equation}\label{vp}
\varphi\, :\, X\, \longrightarrow\, X^n
\end{equation}
be the embedding defined by $x\, \longmapsto\, \{x\, ,x_1\, ,\cdots\, ,
x_{n-1}\}\,\in\, X^n$. Define
\begin{equation}\label{vpt}
\widetilde{\varphi}\,:=\, q\circ \varphi\, :\, X\, \longrightarrow\, S^n(X)\, ,
\end{equation}
where $q$ is defined in \eqref{q}.

We will show that
\begin{equation}\label{de}
\text{degree}(\widetilde{\varphi}^*K^{-1}_{S^n(X)})\,=\, n-2g+1\, .
\end{equation}

For $1\, \leq\, i\, <\, j\, \leq\, n$, let $D_{i,j}\, \subset\, X^n$
be the divisor consisting of all $\{y_1\, , \cdots\, , y_{n}\}$ such that
$y_i\, =\,y_j$. Let
$$
D\, :=\, \sum_{1 \leq i < j \leq n} D_{i,j}
$$
be the divisor on $X^n$. For the map $q$ in \eqref{q}, we have
$$
q^*K^{-1}_{S^n(X)}\,=\, K^{-1}_{X^n}\otimes {\mathcal O}_{X^n}(D)\, .
$$
Therefore,
\begin{equation}\label{d2}
\widetilde{\varphi}^*K^{-1}_{S^n(X)}\,=\, \varphi^* q^*K^{-1}_{S^n(X)}
\,=\, (\varphi^*K^{-1}_{X^n})\otimes (\varphi^*{\mathcal O}_{X^n}(D))\, ,
\end{equation}
where $\varphi$ is the map in \eqref{vp}.

Now,
$$
\varphi^*K^{-1}_{X^n}\,=\, TX ~\ \text{ and }\ ~
\varphi^*{\mathcal O}_{X^n}(D)\,=\, {\mathcal O}_X(\sum_{i=1}^{n-1} x_i)\, ,
$$
where $TX$ is the holomorphic tangent bundle of $X$. Therefore, from \eqref{d2}
we conclude that
$$
\widetilde{\varphi}^*K^{-1}_{S^n(X)}\,=\,(TX)\otimes
{\mathcal O}_X(\sum_{i=1}^{n-1} x_i)\, .
$$
Hence
$$
\text{degree}(\widetilde{\varphi}^*K^{-1}_{S^n(X)})\,=\,2-2g +n-1
\,=\, n-2g+1\, ,
$$
and \eqref{de} is proved.

Since $n\, \leq\, 2g-2$, from \eqref{de} we conclude that
$$
\text{degree}(\widetilde{\varphi}^*K^{-1}_{S^n(X)})\, <\, 0\, .
$$
This immediately implies that $S^n(X)$ does not admit a K\"ahler metric
for which all the holomorphic bisectional curvatures are
nonnegative.
\end{proof}

The Chern class $c_1(K^{-1}_{S^n(X)})$ is computed in \cite[p. 333]{Ma}
and \cite[p. 323]{ACGH}. It should be possible to derive
\eqref{de} using this description of $c_1(K^{-1}_{S^n(X)})$.


\end{document}